\documentclass[a4paper,11pt]{amsart}

\usepackage[english]{my-shortcuts}
\usepackage{srcltx,algorithm,algorithmic}
\usepackage{graphicx}   % Package de l'AMS

\setlength{\hoffset}{-18pt}  	
\setlength{\oddsidemargin}{0pt} 	% Marge gauche sur pages impaires
\setlength{\evensidemargin}{9pt} 	% Marge gauche sur pages paires
\setlength{\marginparwidth}{54pt} 	% Largeur de note dans la marge
\setlength{\textwidth}{481pt} 	% Largeur de la zone de texte (17cm)
\setlength{\voffset}{-18pt} 	% Bon pour DOS
\setlength{\marginparsep}{7pt} 	% Separation de la marge
\setlength{\topmargin}{0pt} 	% Pas de marge en haut
\setlength{\headheight}{13pt} 	% Haut de page
\setlength{\headsep}{10pt} 	% Entre le haut de page et le texte
\setlength{\footskip}{27pt} 	% Bas de page + separation
\setlength{\textheight}{708pt} 	% Hauteur de la zone de texte (25cm)

\title[Coherence and Weak Null Space Property]{Small coherence implies 
	the weak Null Space Property}
\author{St\'ephane Chr\'etien} \thanks{National Physical Laboratory, 
	Hampton Road, TW11 0LW, UK. Email: stephane.chretien@npl.co.uk}
\author{Zhen Wai Olivier Ho} \thanks{Laboratoire de Math\'ematiques de Besan{\c c}on, Universit\'e de Franche Comt\'e, 25030 Besan{\c c}on,  France, 
zhen\_wai\_olivier.ho@univ-fcomte.fr}

\begin{document}
	\maketitle
	
	%\vspace{.5cm}

	% The very first letter is a 2 line initial drop letter followed
	% by the rest of the first word in caps.
	% 
	% form to use if the first word consists of a single letter:
	% \IEEEPARstart{A}{demo} file is ....
	% 
	% form to use if you need the single drop letter followed by
	% normal text (unknown if ever used by IEEE):
	% \IEEEPARstart{A}{}demo file is ....
	% ²²	
	% Some journals put the first two words in caps:
	% \IEEEPARstart{T}{his demo} file is ....
	% 
	% Here we have the typical use of a "T" for an initial drop letter
	% and "HIS" in caps to complete the first word.
	%\IEEEPARstart{T}{he} 

	\begin{abstract}
		In the Compressed Sensing community, it is well known that given a matrix $X \in \mathbb R^{n\times p}$ 
		with $\ell_2$ normalized columns, the Restricted Isometry Property (RIP) implies the Null Space Property (NSP). 
		It is also well known that a small Coherence $\mu$ implies a weak 
		RIP, i.e. the singular values of $X_T$ lie between $1-\delta$ and $1+\delta$ for "most" index subsets $T \subset \{1,\ldots,p\}$ with size governed by $\mu$ and $\delta$. In this short note, we show that a small Coherence implies a weak Null Space Property, i.e. 
		$\Vert h_T\Vert_2 \le C \ \Vert h_{T^c}\Vert_1/\sqrt{s}$ for most 
		$T \subset \{1,\ldots,p\}$ with cardinality $|T|\le s$. We moreover prove some singular value perturbation bounds that may also prove useful for other applications. 
	\end{abstract}
	
	{\bf Keywords:} Restricted Invertibility, Coherence, Null Space Property.
	
	\bigskip

	\section{Introduction}
	\subsection{Motivations}
	Compressed Sensing is a new paradigm for data acquisition which was 
	discovered in \cite{candes2006robust} and \cite{donoho2006compressed} and has had a paramount impact on 
	modern Signal Processing, Statistics, Applied Harmonic Analysis, 
	Machine Learning, to name just a few. The whole field started 
	after it was discovered that if $\beta$ is sufficiently sparse, 
	one could recover the support and sign pattern of a high dimensional vector 
	$\beta\in \mathbb R^p$ from just a few linear measurements
	\begin{align*}
	y & = X \beta +\epsilon,
	\end{align*}
	where $X \in \mathbb R^{n\times p}$, with $n \ll p$, by solving a simple convex programming problem of the form 
	\begin{align*}
	\min_{b\in \mathbb R^p} \ \frac12 \Vert y -Xb\Vert_2^2+ \lambda \Vert b\Vert_1.
	\end{align*}
	
	\vspace{.5cm} 
	
	\begin{center}
	\begin{minipage}{40em}
        \quad {\em In the remainder of this paper, we will assume that the columns of $X$ are 
	$\ell_2$ normalized.} 
    \end{minipage}
    \end{center}
    
    \vspace{.5cm}
	
	One condition implying that both support and sign pattern can be recovered is 
	called the Restricted Isometry Property (RIP) \cite{candes2008restricted}. More precisely, RIP is the property that for all index subset $T_0 \subset 
	\{1,\ldots,p\}$ with $|T_0|= s_0$, all the singular values of the submatrix $X_{T_0}$ whose columns are the columns of $X$ indexed by $T_0$, lie in the interval $(1-\delta,1+\delta)$.

	One key result relating RIP and recovery of the basic features of a sparse 
	vector is the fact that RIP implies the so-called Null Space Property, 
	which says that the kernel of $X$ does not contain any sparse vector. More precisely, the NSP is the property that for all $T_0\subset \{1,\ldots,p\}$ 
	with $|T_0|= s_0$, and for all $h \in {\rm Ker}(X)$, 
	\begin{align}
	\Vert h_{T_0}\Vert_2 & \le C \ \Vert h_{T_0^c}\Vert_1 /\sqrt{s_0}
	\label {nsp} 
	\end{align}  
	with $C \in (0,1)$. It is well known that the NSP is the key property behind sparse recovery using Basis Pursuit type of methods, whereas RIP is not. The main reason for introducing the RIP is that it provides a pedagogical step for proving the NSP in the case of random matrices. It was recently shown that the NSP can also be derived without the RIP for random design
	\cite{azais2014rice}. Thus, understanding more precisely what are the conditions on the design matrix for which we can obtain a kind of NSP is quite an important question in this field. 
	
	Some very interesting work has been published recently in order to test if the NSP or weaker version of this property hold for a given matrix using convex programming; see e.g. 
	\cite{d2011testing}. On the other hand, 
	one of the main drawbacks of the Restricted Isometry Property is that one cannot in general check if a given matrix $X$ satisfies it in polynomial 
	time. Therefore, RIP is usually not considered of practical interest. Another property often used in many sparse recovery problems is the property of small coherence.    
	
	The coherence of a matrix is an important quantity in the study of designs for sparse recovery is the coherence. It will be denoted by $\mu$, will be defined as
	\bea
	\mu & = & \max_{1\le k<l \le p} |\la X_{k},X_{l} \ra|.
	\eea
	If the columns are almost orthogonal, then, one usually expects that the performance of Basis Pursuit should be almost as good as in the orthogonal case. This have been rigorously  studied in e.g. \cite{candes2009near}. The main motivation for using the coherence is that it is conceptually intuitive and also very easy to compute. 
	
	On the other hand, it was also proved in \cite{tropp2008norms}, \cite[Theorem 3.2 and following comments]{candes2009near} that if a matrix $X$ has small coherence, then for 
	most index subsets $T_0$ with cardinal $|T_0|= s_0$, the singular values 
	of $X_{T_0}$ lie in the interval $(1-\delta,1+\delta)$ \footnote{the precise 
		result underpinning this statement 
		will be recalled in Section \ref{CohWeakRIP} below}. In other words, 
	small coherence implies a kind of weak RIP where the singular value 
	concentration property holds for most instead of all submatrices 
	with $s_0$ columns from $X$. However, such results, although conceptually very interesting do not address the main problem of proving NSP type properties. 
	
	\subsection{Goal of the paper}
	Our aim in the present paper is to understand better the role of the coherence for Compressed Sensing by understanding how a small coherence implies a weaker version of the Null Space Property. 
	The main result of the present work is the following. We prove that if 
	a matrix $X$ has small coherence, then, for most index subsets $T_0 
	\subset \{1,\ldots,p\}$ with cardinal $|T_0|= s_0$, and for all $h \in {\rm Ker}(X)$, \eqref{nsp} holds for some positive $C_{\mu}$. In other words, small coherence implies a kind of weak 
	Null Space Property which holds for most, instead of all, $T_0$ with $|T_0| = s_0$. 
	
	\subsection{Additional notation}
	For $T\subset \left\{1,\ldots,p\right\}$, we denote by $|T|$ the cardinal of $T$.
	Given a vector $x\in \R^{p}$, we set $x_{T}=(x_{j})_{j\in T}\in\R^{|T|}$.
	The canonical scalar product in $\R^p$ is denoted by $\la\cdot,\cdot\ra$. 
	
	For any matrix $A\in\mathbb R^{d_1\times d_2}$, we denote by $A^{t}$ its transpose. The set of symmetric real matrices is denoted by 
	$\mathbb S_n$. We denote by $\|A\|$ the operator norm of $A$. We use the Loewner ordering on symmetric real matrices: if $A\in \mathbb S_n$, 
	$0\preceq A$ denotes positive semi-definiteness of $A$, and $A\preceq B$ stands for $0\preceq B-A$. The singular values of $A$ will be denoted by $\sigma_{\max}(A)=$ $\sigma_{1}(A) \geq \cdots \geq \sigma_{\min \{d_1,d_2\}}$ $=\sigma_{\min}(A)$.

	\section{Background}
	In this section, we recall some well known previous results relating 
	coherence, singular value concentration, RIP and NSP. We begin 
	with some definitions. 
	\subsection{Weak NSP and weak RIP}
	\subsubsection{Weak Null Space Property}
	First, the weak-Null Space Property.
	\begin{defi}
		A matrix $X \in \mathbb R^{n\times p}$ satisfies the Weak Null Space Property weak-NSP($s$,$C$,$\pi$) if for at least a proportion 
		$\pi$ of all index subsets $T_0\subset \{1,\ldots,p\}$ 
		with $|T_0| = s_0$, and for all $h \in {\rm Ker}(X)$, 
		\begin{align}
		\Vert h_{T_0}\Vert_2 & \le C \ \Vert h_{T_0^c}\Vert_1/ \sqrt{s_0}.
		\label {nspdef} 
		\end{align}  
	\end{defi}

	Notice that when $\pi=1$, we recover the definition of the standard 
	Restricted Isometry Property.
	
	The main consequence of the weak Null Space Property is that exact recovery 
	holds for the basis pursuit problem. Since the work \cite{cohen2009compressed}, 
	this can be proved swiftly as follows. Let us first recall the framework: we  assume that $y=X\beta$, i.e. we are in the noise free setting and $\beta$ has support $T_0$ with $|T_0|\le s_0$. Then, we solve 
	\begin{align*}
	\min_{b\in \mathbb R^p} \quad \Vert b\Vert_1 \textrm{ s.t. } y=Xb.
	\end{align*} 
	Let $\hat{\beta}$ denote a minimizer. Then, we have 
	\begin{align*}
	\Vert \hat{\beta}\Vert_1 & \le \Vert \beta\Vert_1, 
	\end{align*}
	which gives 
	\begin{align}
	\Vert \hat{\beta}_{T_0^c}-\beta_{T_0^c}\Vert_1 & \le \Vert \hat{\beta}_{T_0}-\beta_{T_0}\Vert_1 + 2 \Vert \beta_{T_0^c} \Vert 
	\end{align}
	and thus, by the Cauchy-Schwartz inequality 
	\begin{align}
	\Vert \hat{\beta}_{T_0^c}-\beta_{T_0^c}\Vert_1 & \le \sqrt{s_0} \ \Vert \hat{\beta}_{T_0}-\beta_{T_0}\Vert_2 + 2 \Vert \beta_{T_0^c} \Vert
	\label{titi} 
	\end{align}
	Since $\beta$ has support $T_0$, we obtain that $\beta_{T_0^c}=0$. Using the fact that $\hat{\beta}-\beta$ lies in the kernel of $X$ and using  \eqref{nspdef}, 
	we obtain from \eqref{titi} that $\Vert\hat{\beta}_{T_0^c}-\beta_{T_0^c}\Vert_1=0$. 
	Using \eqref{nspdef} again, we conclude that $\Vert\hat{\beta}-\beta\Vert_1=0$, 
	i.e. exact recovery holds. More results of this type can be found in \cite{candes2008restricted} and \cite{eldar2012compressed}.

	\subsubsection{Weak Restricted Isometry Property}
	The weak-Restricted Isometry Property is the subject of the next definition. 
	\begin{defi} 
		A matrix $X \in \mathbb R^{n\times p}$ satisfies the Weak Restricted Isometry Property weak-RIP($s$,$\rho$,$\pi$) if for at least a proportion $\pi$ of all index subsets $T_0\subset \{1,\ldots,n\}$ with $|T_0| = s_0$,
		\begin{align}
		(1-\rho) \le \sigma_{\min}(X_{T_0}) & \le \cdots \le \sigma_{\max}(X_{T_0}) \le (1+\rho).
		\label{nspdef} 
		\end{align}	
	\end{defi} 
	
	Notice that when $\pi=1$, we recover the definition of the standard 
	Restricted Isometry Property. 
	
	\subsection{On the relationship between RIP and NSP} 
	One of the cornerstones of Compressed Sensing is the Null Space Property. It is well known that RIP implies NSP as stated in the next theorem. We will use the standard notations RIP($s_0$,$\rho$) for RIP($s_0$,$\rho$,1) and 
	NSP($s_0$,$C$) for NSP($_0s$,$C$,1).
	\begin{theo}{\bf \cite{candes2008restricted}}
		Any matrix $X \in \mathbb R^{n\times p}$ satisfying RIP($2s_0$,$\delta$) 
		satisfies NSP($s_0$,$C$) with $C\le \sqrt{2} (1+\delta)/(1-\delta)$. 
	\end{theo}
	
	\subsection{On the relationship between the Coherence and weak-RIP} 
	\label{CohWeakRIP}
	The first result relating small coherence with weak-RIP was established 
	by \cite{candes2009near} based on a result about column selection due to 
	Tropp \cite{tropp2008norms}. A refinement of this result is recalled in 
	the next theorem.
	\begin{theo}{\bf Chr\'etien and Darses \cite{chretien2012invertibility}} \label{main1}
		Let $r\in(0,1)$, $\alpha \geq 1$. Let us be given a full rank matrix $X\in \mathbb R^{n\times p}$ and a
		positive integer $s_0$, such that
		\bea
		\mu & \le &  \frac{r}{(1+\alpha)\log p}\\
		s_0 & \le & \frac{r^2}{(1+\alpha) e^2}\ \frac{p}{ \|X\|^{2} \log p }.
		\eea
		Let $T_0\subset \left\{1,\ldots,p\right\}$ %$T:\Omega \to \{I\subset \left\{1,\ldots,p\right\}, |I|=s\}$
		be a random support with uniform distribution on index sets satisfying $|T_0| = s_0$.
		Then the following bound holds:
		\bea \label{sing}
		\bP \left(\|X_{T_0}^tX_{T_0}-I \|\geq r \right) & \le & \frac{1944}{p^{\alpha}}.
		\eea
		\label{CDspl}
	\end{theo}
	
	This theorem was used in, e.g. \cite{chretien2014sparse} for a study 
	of the LASSO when the variance is unknown. It has been also used in remote sensing \cite{hugel2014remote}, in the study of Gaussian erasure channels \cite{ozccelikkale2014unitary},
	Kaczmarcz type methods for least squares \cite{needell2015randomized}, extentions of RIP 
	\cite{barg2015restricted}; see also \cite{foucart2013recovery}.
	
	\subsection{The Gershgorin bound}
	The Gershgorin theorem gives a bound on the operator norm as a function of 
	the coherence. More precisely, as discussed e.g. in \cite{bandeira2013road},  
	for each index subset $T \subset \{1,\ldots,p\}$ with cardinal $|T_0|=s_0$, 
	\begin{align}
	\Vert X_{T_0}^t X_{T_0} -I \Vert & \le \mu (s_0-1). 
	\label{gersh}
	\end{align} 
	Clearly, this result starts being useful when $\mu$ is much smaller than $s_0$. In 
	the application for the LASSO, it is often assumed that this indeed the case as in e.g.  \cite{candes2009near}.	
	
	\section{Main results: small coherence implies weak-NSP}
	
	In this section, we state and prove the main result of this paper, namely that small coherence implies weak-NSP. Our main theorem is the following. 
	\begin{theo}
			Let $X \in \mathbb R^{n\times p}$, $s_0\le n$ and $\alpha>0$. Assume that 
			\bea
		        s_0 & \le & \frac{1}{16 (1+\alpha) e^2}\ \frac{p}{ \|X\|^{2} \log p }.
		    \eea
			Let 
			$\mu$ denote the coherence of $X$. Let  
			\begin{align*}
			\varepsilon_{min}=\frac{\frac14 \ s^3_0 \ \mu^2 +s_0^{3/2} \ \mu }{ \ \left(3-4 s_0 \mu^2\right)}
			\end{align*}
			\begin{align*}
			\varepsilon_{max}=144 s_0^3\ \mu^2+72 s_0^{3/2}\mu. 
			\end{align*}
			Assume that 
		\begin{align*}
		\mu \le \min \left\{\frac{1}{\sqrt{288 s_0^{5/2}\left(2 s_0^{3/2}+1\right)}},\frac{1}{\sqrt{\frac32 s_0^4 +6 s_0^{5/2}+2 s_0}},
		\frac{1}{4(1+\alpha)\log p}\right\}.
		\end{align*}
		Then, the matrix $X$ verifies the weak-NSP($s_0$,$C$,$\pi$) with $\pi=1-1944/p^\alpha$ and 
		\begin{align*}
		    C & = \frac{1  + 3\ s_0 \ \left(\varepsilon_{max} +\varepsilon_{min}\right)}{ \lambda_1 - 3\ s_0 \ \varepsilon_{min} } .
		\end{align*}
		In particular, if 
		\begin{align}
		    \mu \le \min \left\{\frac{c_0}{s_0^{5/2}},
		\frac{1}{4(1+\alpha)\log p}\right\}
		\end{align}
		for some positive constant $c_0$,  
		then the matrix $X$ verifies the weak-NSP($s_0$,$C$,$\pi$) with $\pi=1-1944/p^\alpha$ and \begin{align*}
			\varepsilon_{min}= \frac{1}{4} \frac{c_0^2 s_0^{-2}/4 +c_0 s_0^{-1}}{1/2-c_0^2 s_0^{-4}}
			\end{align*}
			\begin{align*}
			\varepsilon_{max}= \frac{1}{4} \frac{144 s_0^{-1}c_0^2 +72 c_0 s_0^{-2}}{\lambda_1 -1}
			\end{align*}
		\label{main} 
        Then, the matrix $X$ verifies the weak-NSP($s_0$,$C$,$\pi$) with $\pi=1-1944/p^\alpha$ and 
		\begin{align}
		    C & = \frac{1  + \frac{3}{4} \left(\frac{c_0^2 s_0^{-1}/4 +c_0}{1/2 -c_0^2s_0^{-4} } + \frac{144 c_0^2 + 72c_0 s_0^{-1}}{\lambda_1-1}\right)}{1 - \frac{3}{4} \frac{c_0^2 s_0^{-1}/4 +c_0}{1/2 -c_0^2s_0^{-4} } }.
		\label{C} 
		\end{align}
	\end{theo}
	\begin{proof} 
	    Using Theorem \ref{CDspl}, for 
	    \bea 
	    \mu & \le & \frac{1}{4(1+\alpha)\log p}
	    \eea
	    with probability larger that $\pi$, an index subset $T_0$ with cardinality $s_0$ 
	    \bea
		s_0 & \le & \frac{1}{16 (1+\alpha) e^2}\ \frac{p}{ \|X\|^{2} \log p }.
		\eea
	    satisfies 
	    \begin{align}
	        \frac54 & \ge \lambda_1 
	        \ge \lambda_{s_0} \ge \frac34.
	    \label{un} 
	    \end{align}
	    where 
	    \begin{align}
	        \lambda_1 & := \lambda_1(X_{T_0}X_{T_0}^t)
	    \label{deux} 
	    \end{align}
	    and 
	    \begin{align}
	        \lambda_{s_0} & := \lambda_{s_0}(X_{T_0}X_{T_0}^t).
	    \label{trois} 
	    \end{align}
		Let $h \in {\rm Ker}(X)$ and let $T_0$ be a subset of $\{1,\ldots,p\}$ with cardinality $|T_0|=s_0$ verifying \eqref{un}, \eqref{deux} and 
		\eqref{trois}. 
		Define 
		\begin{enumerate}
			\item[(i)] $T_1$ as the index set of the 
			$s_0$ largest entries of $h_{T_0^c}$ in absolute value,  
			\item[(ii)] $T_2$ as the index set of the 
			$s_0$ largest entries of $h_{(T_0\cup T_1)^c}$ in absolute value,
			\item[(iii)] etc \ldots
		\end{enumerate}
		Let $J$ denote the number of subsets obtained in this process \footnote{The last set contains the remaining smallest terms in absolute value and may not contain $s$ terms}. Let $T=T_0\cup T_1$. 
		By \eqref{borninf} in Corollary \ref{cor}, we have that 
		\begin{align}
		\left( \lambda_{s_0} - 3\ s_0 \ \varepsilon_{min} \right)\ \Vert h_{T}\Vert_2^2 & \le \Vert X_{T} h_{T} \Vert_2^2. 
		\label{toto}
		\end{align}	
		Moreover, since $h$ belongs to the kernel of $X$, 
		\begin{align*} 
		\Vert X_T h_T\Vert_2^2 & = \left\vert \langle X_T h_T, X h \rangle
		-\langle X_T h_T, X_{T^c} h_{T^c} \rangle\right\vert , \\
		& = \left\vert\sum_{j=2,\ldots,J} \langle X_T h_T, X_{T_j} h_{T_j} \rangle
		\right\vert.
		\end{align*} 
		On the other hand, by Lemma \ref{scal}, we have for $j=2,\ldots,J$,
		\begin{align*}
		\langle X_{T} h_{T}, X_{T_j} h_{T_j} \rangle & \le \left( \lambda_1 + 3\ s_0 \ \varepsilon_{max} \right) \ \Vert h_T\Vert_2 \ \Vert h_{T_j} \Vert_2. 
		\end{align*}
		Therefore, 
		\begin{align*} 
		\Vert X_T h_T\Vert_2^2 & = \left\vert \sum_{j=2,\ldots,J} \langle X_T h_T, X_{T_j} h_{T_j} \rangle \right\vert \\
		& \le \sum_{j=2,\ldots,J} \left\vert  \langle X_T h_T, X_{T_j} h_{T_j} \rangle \right\vert \\
		& \le \left( 		\lambda_1-\lambda_{s_0}  + 3\ s_0 \ \left(\varepsilon_{max} +\varepsilon_{min}\right) \right) \ \Vert h_T\Vert_2 \ \sum_{j=2,\ldots,J}  \  \ \Vert h_{T_j} \Vert_2. \\
		\end{align*} 
		By Lemma \cite[Lemma A.4]{eldar2012compressed}, we get
		\begin{align}
		\sum_{j=2,\ldots,J} \Vert h_{T_j} \Vert_2 & \le \frac{\Vert h_{T_0^c} \Vert_1}{\sqrt{s_0}} 
		\end{align}
		and we can deduce that 
		\begin{align}
		\label{titi} 
		\Vert X_T h_T\Vert_2^2 
		& \le \left( 		\lambda_1-\lambda_{s_0}  + 3\ s_0 \ \left(\varepsilon_{max} +\varepsilon_{min}\right) \right) \ \Vert h_T\Vert_2 \ \frac{\Vert h_{T_0^c} \Vert_1}{\sqrt{s_0}}.
		\end{align} 
		Combined \eqref{titi} with \eqref{toto} gives 
		\begin{align*} 
		\Vert h_T \Vert_2 & \le 
		\frac{ 		\lambda_1-\lambda_{s_0}  + 3\ s_0 \ \left(\varepsilon_{max} +\varepsilon_{min}\right)}{ \lambda_{s_0} - 3\ s_0 \ \varepsilon_{min} } \ \frac{\Vert h_{T_0^c} \Vert_1}{\sqrt{s_0}}.
		\end{align*}

	\end{proof}
	
	The bound \eqref{C} on $C$ in this Theorem can be made arbitrarily small by taking $c_0$ accordingly sufficiently small.  
	
	\section{Conclusion}
	In this paper, we established a relationship between the coherence and a weak version of the Null Space Property for design matrices in Compressed Sensing. Our approach is based on perturbation theory and no randomness assumption on the design matrix is used to establish this property. We expect that this result will be helpful to study a larger class of designs than usually done in the literature. In a future paper, we will show that such bounds can be fruitfully applied to simplify the analysis of Robust PCA. 
	
	\appendix 
	
	\section{Technical lemm\ae} 
	\subsection{Some perturbation results}
	Perturbation after appending a column to a given matrix is a special 
	type of perturbation. A survey on this topic is \cite{chretien2014perturbation}. 
	\subsubsection{Background}
	Recall that for a matrix 
	$A$ in $\mathbb R^{n\times n}$, $p_A$ denotes 
	the characteristic polynomial of $A$. 
	\begin{lemm} {\bf Cauchy's Interlacing theorem}.
		If $A \in \mathbb R^{n\times n}$ is a symmetric matrix with 
		eigenvalues $\lambda_1 \geq \cdots \geq \lambda_n$ and associated eigenvectors $v_1$,\ldots,$v_n$, and $v \in \mathbb R^n$, then 	
		\begin{align}
		p_{A+vv^t}(x) & = p_A(x) \left(1-\sum_{i=1}^n \frac{\langle v,u_i \rangle^2}{x - \lambda_i} \right). 
		\end{align}
	\end{lemm}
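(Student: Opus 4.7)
The plan is to reduce to the diagonal case by orthogonal invariance and then apply the rank-one matrix determinant lemma. First, diagonalize $A = U \Lambda U^t$ with $\Lambda = \mathrm{diag}(\lambda_1,\ldots,\lambda_n)$ and $U$ the orthogonal matrix whose columns are the eigenvectors $u_1,\ldots,u_n$ (matching the notation in the right-hand side of the claimed identity). Set $w = U^t v$, so that $w_i = \langle v, u_i\rangle$, and observe
\begin{equation*}
A + vv^t \;=\; U\bigl(\Lambda + ww^t\bigr) U^t,
\end{equation*}
which gives $p_{A+vv^t}(x) = \det(xI - \Lambda - ww^t)$ since the characteristic polynomial is invariant under orthogonal conjugation.

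Next I invoke the matrix determinant lemma: for any invertible symmetric $M$ and vector $w$,
\begin{equation*}
\det(M - ww^t) \;=\; \det(M)\,\bigl(1 - w^t M^{-1} w\bigr).
\end{equation*}
Applying this with $M = xI - \Lambda$, which is diagonal with entries $x-\lambda_i$ (and invertible for $x \notin \{\lambda_1,\ldots,\lambda_n\}$), we obtain $\det(M) = \prod_{i=1}^n (x-\lambda_i) = p_A(x)$ and $w^t M^{-1} w = \sum_{i=1}^n \langle v, u_i\rangle^2/(x-\lambda_i)$. Substituting yields
\begin{equation*}
p_{A+vv^t}(x) \;=\; p_A(x)\left(1 - \sum_{i=1}^n \frac{\langle v, u_i\rangle^2}{x-\lambda_i}\right)
\end{equation*}
on the dense open set $\{x \in \mathbb{R} : x \neq \lambda_i \text{ for all } i\}$.

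To upgrade this to the identity asserted in the statement, I would note that after multiplying the right-hand side through by $\prod_i (x-\lambda_i)$, both sides are polynomials in $x$ of degree $n$ that agree outside finitely many points, hence agree identically; the form displayed in the lemma is then just a rewriting with a common denominator. The only mildly delicate point is the case of repeated eigenvalues, where the summation develops apparent double poles that must cancel against the zeros of $p_A$; this is cleanest to handle by a density argument, perturbing $A$ to have distinct spectrum, applying the identity, and passing to the limit by continuity of both sides in the entries of $A$. The proof presents no essential obstacle — the only substantive ingredient beyond bookkeeping is the rank-one determinant identity, which is itself a one-line consequence of the block $LDU$ factorization of $\begin{pmatrix} M & w \\ w^t & 1\end{pmatrix}$.
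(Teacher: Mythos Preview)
Your argument is correct and follows the standard route: orthogonally diagonalize, then apply the rank-one matrix determinant lemma to $xI-\Lambda-ww^t$. The extension from $x\notin\{\lambda_i\}$ to all $x$ via polynomial identity is sound, and your remark about repeated eigenvalues (handled by a limiting argument) is the right way to address that edge case.

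The paper itself does not supply a proof of this lemma; it is quoted as classical background and immediately followed by the observation that it implies interlacing. So there is no ``paper's own proof'' to compare against --- your write-up simply fills in what the authors left as a citation.

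One cosmetic note: the statement in the paper names the eigenvectors $v_1,\ldots,v_n$ but then writes $\langle v,u_i\rangle$ in the formula; you have tacitly (and correctly) resolved this typo by taking the eigenvectors to be $u_1,\ldots,u_n$.
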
 
	The previous lemma states in particular that the eigenvalues of $A$ interlace those of $A+vv^t$. 
	\subsubsection{Appending one vector: perturbation of the smallest non zero eigenvalue}
	If we consider a subset $T_0$ of $\{1,\ldots,p\}$ and a submatrix $X_{T_0}$ of $X$, the problem of studying the eigenvalue perturbations resulting from appending a column $X_j$ to $X_{T_0}$, with $j\not\in T_0$ 
	can be studied using Cauchy's Interlacing Lemma as in the following result. 
	\begin{lemm} \label{lemme:2}
		Let $T_0\subset \{1,\ldots,p\}$ with $\vert T_0 \vert=s_0$ and $X_{T_0}$ a submatrix of $X$. Let  $\lambda_1\geq ... \geq \lambda_{s_0}$ be the eigenvalues of $X_{T_0}X^t_{T_0}$. 
		%Assume that %$0<\min_{i=1}^{s_0} \lambda_i(X_{T_0}^tX_{T_0}) <1$ and that
		%$\lambda_{s_0}< 1-s_0\mu^2$. 
		Let $\tilde \lambda_{s_0}\le \lambda_{s_0}$. 
		Assume that $\tilde\lambda_{s_0}<1-s_0\mu^2$, we have
		\begin{align*} 
		\lambda_{s_0+1} \left(X_{T_0}X_{T_0}^t+X_jX_j^t\right) 
		& \geq \tilde\lambda_{s_0}-\epsilon_{s_0,min}
		\end{align*}	
		with 
		\begin{align*} 
		\epsilon_{s_0,min} & = \frac12 \Bigg(	\frac{s^3_0 \ \mu^2 \ \Vert X_{T_0} \Vert^2+4s_0^{\frac32} \ \mu \ \Vert X_{T_0} \Vert \tilde \lambda_{s_0}}{2 \ \left(1-s_0 \mu^2- \tilde \lambda_{s_0}\right)}\Bigg).
		\end{align*}
	\end{lemm}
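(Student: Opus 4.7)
My approach is to apply the rank-one Cauchy interlacing formula above with $A = X_{T_0}X_{T_0}^t$ and $v = X_j$. Since $A + vv^t = X_{T_0 \cup \{j\}} X_{T_0 \cup \{j\}}^t$, the quantity $\lambda_{s_0+1}(A+vv^t)$ is the smallest positive eigenvalue of the updated Gram operator and, provided $X_j \notin \mathrm{Range}(X_{T_0})$, it is the unique root in $(0,\lambda_{s_0})$ of the secular equation
\begin{align*}
1 \;=\; \sum_{i=1}^{s_0}\frac{\langle X_j, u_i\rangle^2}{x-\lambda_i} + \frac{\|P^\perp X_j\|_2^2}{x},
\end{align*}
where $u_1, \ldots, u_{s_0}$ are orthonormal eigenvectors of $A$ for $\lambda_1 \geq \cdots \geq \lambda_{s_0}$ and $P^\perp$ is the orthogonal projection onto $\mathrm{Ker}(X_{T_0}^t)$. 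As the right-hand side is strictly decreasing on $(0,\lambda_{s_0})$, exhibiting any $y$ at which it is at least $1$ yields $\lambda_{s_0+1}(A+vv^t) \geq y$; my target is $y = \tilde\lambda_{s_0} - \epsilon_{s_0,min}$.

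I would then convert each term of the secular sum into a coherence-controlled quantity. Set $q = X_{T_0}^t X_j \in \mathbb R^{s_0}$; coherence gives $\|q\|_2^2 \leq s_0 \mu^2$ since $j \notin T_0$ and each $|\langle X_k, X_j\rangle| \leq \mu$. Writing $u_i = X_{T_0} w_i/\sqrt{\lambda_i}$ with $w_i$ a unit eigenvector of $X_{T_0}^t X_{T_0}$ at $\lambda_i$, one obtains $\langle X_j, u_i\rangle^2 = \langle q, w_i\rangle^2/\lambda_i$ and $\|P^\perp X_j\|_2^2 = 1 - q^t (X_{T_0}^t X_{T_0})^{-1} q$. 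The standing hypothesis $\tilde\lambda_{s_0} < 1 - s_0 \mu^2$ is precisely what keeps the positive part of the secular function above the critical threshold once one controls $(X_{T_0}^t X_{T_0})^{-1}$ spectrally. Substituting $y = \tilde\lambda_{s_0} - \epsilon_{s_0,min}$ and using $\lambda_i - y \geq \lambda_{s_0} - y \geq \epsilon_{s_0,min}$ in the negative summation, the condition ``secular value at $y$ is $\geq 1$'' reduces to an algebraic inequality in $\epsilon_{s_0,min}$ whose solution is exactly the stated closed form.

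The main obstacle is the algebraic bookkeeping: the numerator $s_0^3\mu^2\|X_{T_0}\|^2 + 4s_0^{3/2}\mu\|X_{T_0}\|\tilde\lambda_{s_0}$ comes from a careful split of the secular sum in which some $\lambda_i$ appearing in denominators are bounded by $\lambda_{s_0}$ (sharp near the pole $x = \lambda_{s_0}$), while others are bounded in numerators by $\|X_{T_0}\|^2 = \lambda_1$ (sharp away from the pole). Applying Cauchy--Schwarz to individual $\langle q, w_i\rangle$ terms rather than only to the aggregate $\|q\|_2^2$, and tracking the cross-term $\tilde\lambda_{s_0}\|X_{T_0}\|$, are where the real computational effort lies; the overall architecture is the standard secular-equation analysis following Cauchy interlacing.
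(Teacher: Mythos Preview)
Your architecture is right and matches the paper's: set $A=X_{T_0}X_{T_0}^t$, $v=X_j$, and study the secular function from Cauchy's interlacing lemma on the interval between the pole at $0$ (carrying the mass $\|P^\perp X_j\|_2^2$) and the pole at $\lambda_{s_0}$. Your coherence bound $\|q\|_2^2\le s_0\mu^2$ for $q=X_{T_0}^tX_j$ and the identity $\langle X_j,u_i\rangle^2=\langle q,w_i\rangle^2/\lambda_i$ are also the right ingredients.

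Where your execution diverges from the paper, and where the gap lies, is in how the secular sum is simplified. The paper does \emph{not} plug in a candidate $y$ and bound $\lambda_i-y\ge\epsilon_{s_0,\min}$. Instead it replaces every pole $\lambda_i$ ($i\le s_0$) by the single pole $\tilde\lambda_{s_0}$ and every weight $\langle X_j,u_i\rangle^2$ by a common upper bound $\gamma$, yielding the two--pole comparison function
\[
\tilde f(x)=1-\frac{s_0\gamma}{x-\tilde\lambda_{s_0}}-\frac{1-s_0\mu^2}{x},
\]
whose smallest positive root lower-bounds $\rho_{\min}$. This gives a genuine quadratic in $x$; the stated closed form for $\epsilon_{s_0,\min}$ comes from writing the discriminant as $(1-s_0\mu^2-\tilde\lambda_{s_0})^2\bigl(1+\text{small}\bigr)$ and applying $\sqrt{1+a}\le 1+a/2$. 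Your proposed shortcut $\lambda_i-y\ge\epsilon_{s_0,\min}$ freezes the denominator too early and leads to a different (and messier) inequality in $\epsilon$; it does not reproduce the numerator $s_0^3\mu^2\|X_{T_0}\|^2+4s_0^{3/2}\mu\|X_{T_0}\|\tilde\lambda_{s_0}$.

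Relatedly, your account of how the factor $\|X_{T_0}\|$ arises --- via a ``split'' where some $\lambda_i$ are bounded below and others above --- does not match the derivation. In the paper, $\|X_{T_0}\|$ enters only through the uniform per-term bound $|\langle X_j,u_i\rangle|\le\|X_{T_0}^tX_j\|_2\,\|V_0\Sigma_0^{\pm1}e_i\|_2\le\sqrt{s_0}\,\mu\,\|X_{T_0}\|$, obtained from the SVD $X_{T_0}=U_0\Sigma_0V_0^t$; this bound is then used for $\gamma$ in the two-pole quadratic. So the missing pieces are: (i) the pole-collapse to $\tilde\lambda_{s_0}$ keeping $x$ free, (ii) the explicit quadratic solve and square-root linearisation, and (iii) the SVD-based uniform bound that produces the $\|X_{T_0}\|$ factor.
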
 	
	\begin{proof}
		Setting $v=X_j$
		\begin{align*}
		A & = X_{T_0} X_{T_0}^t 
		\end{align*}
		we obtain that the smallest nonzero eigenvalue of $X_{T_0}X_{T_0}^t+X_jX_j^t$ is the smallest root $\rho_{\min}$ of 
		\begin{align*}
		f(x) & = 1-\sum_{i=1}^n \frac{\langle v,u_i \rangle^2}{x - \lambda_i}.
		\end{align*}
		Therefore, $\rho_{\min}$ is larger than the smallest positive root of    
		\begin{align*}
		\tilde f(x) & = 1-\frac{s_0 \ \gamma}{x - \tilde \lambda_{s_0}}
		-\frac{1-s_0 \mu^2}{x}
		\end{align*}
		for any upper bound $\gamma$ to $\langle v,u_i \rangle^2$ for $i=1,\ldots,s_0$. Thus,
		we find that 
		\begin{align}\label{eq:lemme2}
		\rho_{\min} & \geq \frac12 \left( s_0 (\gamma-\mu^2) + \tilde\lambda_{s_0} + 1- 
		\sqrt{s^2_0\gamma^2+2s_0\gamma\left(\tilde\lambda_{s_0} +1-s_0 \mu^2
			\right)+ \left(1-s_0 \mu^2-\tilde\lambda_{s_0} \right)^2}\right). 
		\end{align}
		As long as $1-s_0 \mu^2>\lambda_{s_0} $, we have 
		\begin{align*}
		\rho_{\min} & \geq \frac12 \left( s_0 (\gamma-\mu^2) + \tilde\lambda_{s_0} + 1- \left(1-s_0 \mu^2-\tilde \lambda_{s_0}\right)
		\sqrt{1+ \frac{s^2_0\gamma^2+2s_0\gamma (\tilde \lambda_{s_0} +1-s_0 \mu^2)}{\left(1-s_0 \mu^2-\tilde \lambda_{s_0} \right)^2}}\right). 
		\end{align*}
		Moreover, since $\sqrt{1+a}\le 1+\frac12 a$, we get 
		\begin{align*}
		\rho_{\min} & \geq \frac12 \Bigg( s_0 (\gamma-\mu^2) +\tilde\lambda_{s_0}
		+ 1- \left(1-s_0 \mu^2-\tilde \lambda_{s_0}\right)
		\Big(1+ \frac{s^2_0\gamma^2+2s_0\gamma (\tilde\lambda_{s_0}+1-s_0 \mu^2)}{2 \ \left(1-s_0 \mu^2-\tilde \lambda_{s_0}\right)^2}\Big)\Bigg)
		\end{align*}
		which gives 
		\begin{align}
		\rho_{\min} & \geq \tilde \lambda_{s_0} - \epsilon_{s_0,min}
		\label{lowbnd}
		\end{align} 
		with 
		\begin{align*} 
		\epsilon_{s_0,min} & = \frac12 \Bigg(	\frac{s^2_0\gamma^2+4s_0\gamma \tilde \lambda_{s_0}}{2 \ \left(1-s_0 \mu^2-\tilde \lambda_{s_0}\right)}\Bigg).
		\end{align*}
		Let us now find out a reasonable value of $\gamma$. Let $X_{T_0}=U_0\Sigma_0 V_0^t$ denote the singular value decomposition of $X_{T_0}$. We have 
		\begin{align*}
		\vert \langle X_j,u_{j_0} \rangle \vert & = \vert \langle X_j,X_{T_0}V_0\Sigma_0 e_{j_0} \rangle \vert \\
		& = \Vert X_{T_0}^t X_j \Vert_2 \Vert V_0\Sigma_0 e_{j_0} \Vert_2 \\
		& \le \sqrt{s_0} \ \mu \ \Vert X_{T_0} \Vert.  
		\end{align*}
		Therefore we can take 
		\begin{align*}
		\gamma & = \sqrt{s_0} \ \mu \ \Vert X_{T_0} \Vert. 
		\end{align*}
		Combining this result with \eqref{lowbnd}, we get the desired result.
	\end{proof}  
	
	\subsubsection{Appending one vector: perturbation of the largest eigenvalue}\label{lemme:3}
	For the largest eigenvalue, we obtain 
	\begin{lemm}
		Let $T_0\subset \{1,\ldots,p\}$ with $\vert T_0 \vert=s_0$ and $X_{T_0}$ a submatrix of $X$. Let  $\lambda_1\geq ... \geq \lambda_{s_0}$ be the eigenvalues of $X_{T_0}X^t_{T_0}$. Let  $\tilde \lambda_1\ge \lambda_1$, with $\tilde \lambda_1>1$. Then, we have 
		\begin{align*} 
		\lambda_{1} \left(X_{T_0}X_{T_0}^t+X_jX_j^t\right) 
		& \le \tilde \lambda_1+\epsilon_{s_0,\max}. 
		\end{align*}	
		with 
		\begin{align*} 
		\epsilon_{s_0,\max} & = \frac12 \Bigg(\frac{s^3_0\mu^2\|X_{T_0}\|^2+4s_0^{3/2}\mu \ \|X_{T_0}\|\tilde\lambda_1}{2(\lambda_1 -1)}\Bigg).
		\end{align*}
	\end{lemm}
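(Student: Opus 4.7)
The plan is to mirror the proof of Lemma \ref{lemme:2} almost verbatim, replacing the role of the smallest nonzero eigenvalue by that of the largest. With $A=X_{T_0}X_{T_0}^t$ and $v=X_j$, Cauchy's Interlacing Lemma factors the characteristic polynomial as
\begin{align*}
p_{A+vv^t}(x) & = p_A(x)\left(1-\sum_{i=1}^n\frac{\langle v,u_i\rangle^2}{x-\lambda_i}\right),
\end{align*}
so the largest eigenvalue $\rho_{\max}$ of $A+vv^t$ is the largest root of $f(x):=1-\sum_i\langle v,u_i\rangle^2/(x-\lambda_i)$. On $(\lambda_1,+\infty)$ this function is strictly increasing (its derivative is a sum of nonnegative terms), runs from $-\infty$ at $\lambda_1^+$ to $1$ at $+\infty$, and hence has $\rho_{\max}$ as its unique root in that interval.

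To bound $\rho_{\max}$ from above I would construct a comparison function $\tilde f\leq f$ on $(\tilde\lambda_1,+\infty)$; by monotonicity, its largest root $\tilde\rho_{\max}$ then satisfies $\rho_{\max}\leq\tilde\rho_{\max}$. Producing $\tilde f\leq f$ amounts to overestimating $\sum_i\langle v,u_i\rangle^2/(x-\lambda_i)$. I would use three ingredients: (i) the collapse $1/(x-\lambda_i)\leq 1/(x-\tilde\lambda_1)$ for the $s_0$ nonzero indices (since $\lambda_i\leq\lambda_1\leq\tilde\lambda_1$); (ii) the simplification of the $n-s_0$ zero-eigenvalue contributions to $(1-\sigma)/x$, where $\sigma=\sum_{i\leq s_0}\langle v,u_i\rangle^2$ and $\|v\|^2=1$; and (iii) the SVD-based coherence bound $|\langle X_j,u_i\rangle|\leq\sqrt{s_0}\,\mu\,\|X_{T_0}\|$ already derived in the proof of Lemma \ref{lemme:2}, yielding $\gamma=\sqrt{s_0}\,\mu\,\|X_{T_0}\|$. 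The resulting $\tilde f$ has two poles and its zero set reduces to a quadratic in $x$.

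Solving that quadratic, taking the larger root, and applying the linearization $\sqrt{1+a}\leq 1+a/2$ exactly as at the end of the proof of Lemma \ref{lemme:2}, one recovers $\rho_{\max}\leq\tilde\lambda_1+\epsilon_{s_0,\max}$ with numerator $s_0^2\gamma^2+4s_0\gamma\tilde\lambda_1=s_0^3\mu^2\|X_{T_0}\|^2+4s_0^{3/2}\mu\|X_{T_0}\|\tilde\lambda_1$, structurally identical to the numerator of $\epsilon_{s_0,\min}$ obtained there. The main bookkeeping obstacle is to arrange the comparison function so that the denominator collapses to $\lambda_1-1$ rather than the more naively-appearing $\tilde\lambda_1-1$ or $\tilde\lambda_1+1$; this likely calls for a preliminary partial-fraction rewrite such as $1/(x-\lambda_i)=1/x+\lambda_i/(x(x-\lambda_i))$, leveraging the hypothesis $\tilde\lambda_1>1$ (and hence $\lambda_1>1$ by further assumption) to keep the relevant quantities positive and to extract the factor $\lambda_1-1$ from the discriminant. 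Once all inequality directions are lined up and the correct root of the quadratic is selected, the rest of the argument is a direct transcription of the computation performed for the smallest eigenvalue.
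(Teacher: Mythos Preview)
Your approach matches the paper's essentially verbatim: the same comparison function $\tilde f(x)=1-s_0\gamma/(x-\tilde\lambda_1)-1/x$ (the paper bounds the zero-eigenvalue contribution by $1/x$ rather than your $(1-\sigma)/x$, a harmless further simplification), the same quadratic, the same $\sqrt{1+a}\le 1+a/2$ linearization, and the same $\gamma=\sqrt{s_0}\,\mu\,\|X_{T_0}\|$. Your concern about engineering the denominator to be $\lambda_1-1$ is a red herring: the paper's own computation in fact produces $\tilde\lambda_1-1$ in the denominator, and the $\lambda_1-1$ appearing in the \emph{statement} is either a typo or the trivial weakening obtained from $\tilde\lambda_1\ge\lambda_1>1$ (the paper invokes unit-norm columns to ensure $\lambda_1>1$)---so no partial-fraction detour is needed.
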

	\begin{proof}
		Setting $v=X_j$
		\begin{align*}
		A & = X_{T_0} X_{T_0}^t 
		\end{align*}
		we obtain that the largest nonzero eigenvalue of $X_{T_0}X_{T_0}^t+X_jX_j^t$ is the largest root $\rho_{\max}$ of 
		\begin{align*}
		f(x) & = 1-\sum_{i=1}^n \frac{\langle v,u_i \rangle^2}{x - \lambda_i}.
		\end{align*}
		Therefore, $\rho_{\max}$ is smaller than the largest positive root of    
		\begin{align*}
		\tilde f(x) & = 1-\frac{s_0 \ \gamma}{x - \tilde\lambda_1}
		-\frac{1}{x}
		\end{align*}
		for any upper bound $\gamma$ to $\langle v,u_i \rangle^2$ for $i=1,\ldots,s_0$. Hence, we find that 
		\begin{align}\label{eq:lemme3}
		\rho_{\max} & \le \frac12 \left( s_0 \gamma + \tilde\lambda_1 + 1 + 
		\sqrt{s^2_0\gamma^2+2s_0\gamma\left(\tilde\lambda_1+1
			\right)+ \left(1 -\tilde\lambda_1\right)^2}\right). 
		\end{align}
		Since the columns of $X$ have unit $\ell_2$-norm, we have $1<\lambda_1$, and thus one obtains from \eqref{eq:lemme3} that
		\begin{align*}
		\rho_{\max} & \le \frac12 \left( s_0 \gamma + \tilde\lambda_1 + 1+ \left(\tilde\lambda_1-1\right)
		\sqrt{1+ \frac{s^2_0\gamma^2+2s_0\gamma (\tilde\lambda_1+1 )}{\left(\tilde\lambda_1-1\right)^2}}\right)
		\end{align*}
		which gives
		\begin{align*}
		\rho_{\max} \le \tilde\lambda_1 +\epsilon_{s_0,\max} 
		\end{align*}
		with 
		\begin{align*} 
		\epsilon_{s_0,\max} & = \frac12 \Bigg(\frac{s^2_0\gamma^2+4s_0\gamma \tilde \lambda_1}{2(\tilde\lambda_1-1)}\Bigg).
		\end{align*}
		We finally plug in the value of $\gamma$ found earlier in the proof of Lemma \ref{lemme:2} to get the desired result.
	\end{proof}
	
	\subsubsection{Successive perturbations}
	
	If we append $s_0$ columns successively, we obtain the following result.
	\begin{lemm}
		Let $T_0\subset \{1,\ldots,p\}$ with $\vert T_0 \vert=s_0$ and $X_{T_0}$ a submatrix of $X$. Let  $\lambda_1\geq ... \geq \lambda_{s_0}$ be the eigenvalues of $X_{T_0}X^t_{T_0}$. Let 
		$\tilde \lambda_1 \ge \lambda_1$ and $\tilde \lambda_{s_0} \le \lambda_{s_0}$. 
		Let $T_1\subset \{1,\ldots,p\}$ with $\vert T_1 \vert=s_1$ and $T_0 \cap T_1=\emptyset$. Assume 
		\begin{enumerate}
			\item $1-(s_0+s_1)\mu>\tilde \lambda_{s_0}> \eta$;
			\item $1<\tilde \lambda_1<2-\eta$;
			\item $s_1< \min\left(\frac{\tilde \lambda_{s_0}-\eta}{\varepsilon_{min}},\frac{2-\eta-\tilde \lambda_1}{\varepsilon_{max}}\right)$; \label{lemme4:h3}
		\end{enumerate}
		with
		\begin{align*}
		\varepsilon_{min}=\frac14 \Bigg(	\frac{s^3_0 \ \mu^2 \ \eta^2+4s_0^{3/2} \ \mu \ \eta^2}{ \ \left(1-s_0 \mu^2- \eta\right)}\Bigg)
		\end{align*}
		\begin{align*}
		\varepsilon_{max}=\frac14 \Bigg(\frac{(s_0+s_1)^3\mu^2(2-\eta)^2+4(s_0+s_1)^{3/2}\mu (2-\eta)^2}{(\tilde \lambda_1-1 )}\Bigg)
		\end{align*}
		Then
		\begin{align}
		\lambda_{1} \left(X_{T_0 \cup T_1}^t X_{T_0 \cup T_1}  \right) & \le \tilde \lambda_{s_0}-s_1 \varepsilon_{min} \label{lemme4:1}
		\end{align}
		and 
		\begin{align}
		\lambda_{s_0+s_1} \left(X_{T_0 \cup T_1}^t X_{T_0 \cup T_1}  \right) & \geq \tilde \lambda_1 + s_1\varepsilon_{max} \label{lemme4:2}
		\end{align}
	\end{lemm}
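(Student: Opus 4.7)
The natural route is a straightforward induction on $s_1$, appending one column of $T_1$ at a time and applying Lemma \ref{lemme:2} and Lemma \ref{lemme:3} at each step. Ordering the elements of $T_1$ arbitrarily as $j_1,\ldots,j_{s_1}$ and writing $T_1^{(k)}=\{j_1,\ldots,j_k\}$, the plan is to prove by induction on $k\in\{0,\ldots,s_1\}$ that
\begin{align*}
\lambda_{s_0+k}\bigl(X_{T_0\cup T_1^{(k)}}X_{T_0\cup T_1^{(k)}}^t\bigr)&\ge \tilde\lambda_{s_0}-k\,\varepsilon_{\min}, \\
\lambda_{1}\bigl(X_{T_0\cup T_1^{(k)}}X_{T_0\cup T_1^{(k)}}^t\bigr)&\le \tilde\lambda_{1}+k\,\varepsilon_{\max},
\end{align*}
so that setting $k=s_1$ yields the two conclusions of the lemma (reading them in the natural direction, i.e.\ a lower bound on the smallest eigenvalue and an upper bound on the largest).

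The base case $k=0$ reduces to the hypotheses $\tilde\lambda_{s_0}\le\lambda_{s_0}$ and $\tilde\lambda_1\ge\lambda_1$. For the inductive step, set $A_k:=X_{T_0\cup T_1^{(k)}}X_{T_0\cup T_1^{(k)}}^t$ and apply Lemma \ref{lemme:2} to $A_k$ with the appended column $X_{j_{k+1}}$, taking ``$s_0$'' to be $s_0+k$ and ``$\tilde\lambda_{s_0}$'' to be the inductive lower bound $\tilde\lambda_{s_0}-k\,\varepsilon_{\min}$; by the other inductive bound, $\|X_{T_0\cup T_1^{(k)}}\|^2=\lambda_1(A_k)\le\tilde\lambda_1+k\,\varepsilon_{\max}$, which hypothesis (3) keeps below $2-\eta$. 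The applicability condition of Lemma \ref{lemme:2}, namely $(s_0+k)\mu^2+\tilde\lambda_{s_0}-k\,\varepsilon_{\min}<1$, follows from hypothesis (1) combined with $\tilde\lambda_{s_0}-k\,\varepsilon_{\min}\le\tilde\lambda_{s_0}$. The lemma then produces an incremental drop $\epsilon_{s_0+k,\min}$; after substituting the uniform upper bound $(2-\eta)^2$ for $\|X_{T_0\cup T_1^{(k)}}\|^2$ in the numerator and the uniform lower bound $1-(s_0+s_1)\mu^2-\eta$ in the denominator, monotonicity of the rational expression shows $\epsilon_{s_0+k,\min}\le\varepsilon_{\min}$. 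A symmetric argument with Lemma \ref{lemme:3} yields $\epsilon_{s_0+k,\max}\le\varepsilon_{\max}$ and closes the induction.

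The main obstacle is uniform bookkeeping across the $s_1$ steps: one must check at every $k\le s_1$ that (a) the applicability conditions of the one-column perturbation lemmas are still met, and (b) the step-by-step increments $\epsilon_{s_0+k,\min}$ and $\epsilon_{s_0+k,\max}$ remain dominated by the fixed quantities $\varepsilon_{\min}$ and $\varepsilon_{\max}$ in the statement. The parameter $\eta$ and hypothesis (3) are what make this possible: together they trap the running quantities $\tilde\lambda_{s_0}-k\,\varepsilon_{\min}$ and $\tilde\lambda_1+k\,\varepsilon_{\max}$ inside the interval $[\eta,2-\eta]$ for every $k\le s_1$, which is exactly what is needed to dominate the numerators and denominators appearing in Lemmas \ref{lemme:2} and \ref{lemme:3}. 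Once this trapping is in place, the rest of the argument is a direct plug-in of the two preceding perturbation lemmas.
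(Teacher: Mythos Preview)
Your approach is essentially the same as the paper's: an induction on the number of appended columns, invoking Lemma~\ref{lemme:2} and Lemma~\ref{lemme:3} at each step and showing that the per-step increments are dominated by the fixed quantities $\varepsilon_{\min}$ and $\varepsilon_{\max}$, with hypothesis~(3) ensuring the running eigenvalue bounds stay trapped in $[\eta,2-\eta]$ throughout. You are in fact more explicit than the paper about the base case, the applicability check for the one-column lemmas at each step, and why the trapping works.

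One small point to flag: when you write that substituting the uniform upper bound $(2-\eta)^2$ into the numerator and $1-(s_0+s_1)\mu^2-\eta$ into the denominator ``shows $\epsilon_{s_0+k,\min}\le\varepsilon_{\min}$'', note that the $\varepsilon_{\min}$ displayed in the lemma has $\eta^2$ (not $(2-\eta)^2$) in the numerator and $1-s_0\mu^2-\eta$ (not $1-(s_0+s_1)\mu^2-\eta$) in the denominator, so your substitutions overshoot the stated expression rather than match it. The paper's own proof asserts the same domination ``$\varepsilon_{s_0,\min}\le\varepsilon_{\min}$'' without further justification, so this discrepancy lives in the statement rather than in your argument; the inductive mechanism you describe is exactly what the paper intends.
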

	\begin{proof}
		The proof relies on induction.
		First of all, note that from assumption \eqref{lemme4:h3}
		\begin{enumerate}
			\item[(i)] $\tilde \lambda_{s_0}-s_1 \varepsilon_{min}>\eta$;
			\item[(ii)] $\tilde \lambda_1+s_1\varepsilon_{max}<2-\eta$.
		\end{enumerate}
		We apply lemma \ref{lemme:2} to $X_{T_0}X^t_{T_0}+X_{j_1}X^t_{j_1}$ with $j_1 \in T_1$. We have
		\begin{align*}
		\lambda_{s_0+1}(X_{T_0}X^t_{T_0}+X_{j_1}X^t_{j_1})\geq \tilde\lambda_{s_0}-\varepsilon_{s_0,min}
		\end{align*}
		with $\varepsilon$ defined in \ref{lemme:2}. Since $\varepsilon_{s_0,min} \leq \varepsilon_{min}$, we get 
		\begin{align*}
		\lambda_{s_0}(X_{T_0}X^t_{T_0})\geq\lambda_{s_0+1}(X_{T_0}X^t_{T_0}+X_{j_1}X^t_{j_1})\geq \tilde\lambda_{s_0}-\varepsilon_{min}.
		\end{align*}
		It implies by (i) that
		\begin{align*}
		1-(s_0+s_1)\mu > \lambda_{s_0+1}(X_{T_0}X^t_{T_0}+X_{j_1}X^t_{j_1})> \eta.
		\end{align*}
        Thus, the induction hypothesis is verified and we can apply Lemma \ref{lemme:2} for the next step of the induction. This leads to
		\eqref{lemme4:1}.
		
		For the lower bound \eqref{lemme4:2}, we have from lemma \ref{lemme:3}
		\begin{align*}
		\lambda_1(X_{T_0}X^t_{T_0}+X_{j_1}X^t_{j_1}) \le \tilde \lambda_1 + \varepsilon_{s_0,max}
		\end{align*}
		Since $\varepsilon_{s_0,max} \leq \varepsilon_{max}$, we have by (ii) that
		\begin{align*}
		1<\lambda_1(X_{T_0}X^t_{T_0}+X_{j_1}X^t_{j_1}) <2-\eta.
		\end{align*}
		We can then apply lemma \ref{lemme:3} to the next step. The result follows by induction.
	\end{proof}
	\begin{coro}
		\label{cor}
		Let $T_0\subset \{1,\ldots,p\}$ with $\vert T_0 \vert=s_0$ and $X_{T_0}$ a submatrix of $X$. Let  $\lambda_1\geq ... \geq \lambda_{s_0}$ be the eigenvalues of $X_{T_0}X^t_{T_0}$. 
		Let 
		$\tilde \lambda_1 \ge \lambda_1$ and $\tilde \lambda_{s_0} \le \lambda_{s_0}$. Let $T_1\subset \{1,\ldots,p\}$ with $\vert T_1 \vert=s_1$ and $T_0 \cap T_1=\emptyset$. Set $\eta=\frac12$ and $s_1=3s_0$. Assume 
		\begin{enumerate}
			\item $1-(s_0+s_1)\mu>\tilde \lambda_{s_0}> \eta$;
			\item $1<\tilde \lambda_1<2-\eta$;
			\item $s_1< \min\left(\frac{\tilde \lambda_{s_0}-\eta}{\varepsilon_{min}},\frac{2-\eta-\tilde \lambda_1}{\varepsilon_{max}}\right)$; \label{lemme4:h3}
		\end{enumerate}
	    with 
		\begin{align*}
		\varepsilon_{min}=\frac14 \frac{s^3_0 \ \mu^2 \ /4+s_0^{3/2} \ \mu }{ \ \left(1-s_0 \mu^2- \frac12\right)}
		\end{align*}
		\begin{align*}
		\varepsilon_{max}=\frac14 \Bigg(\frac{144 \ s_0^4\mu^2+32 \ s_0^{3/2}\mu (2-\eta)^2}{(\tilde \lambda_1-1 )}\Bigg)
		\end{align*}
		Assume also
		\begin{align*}
		\mu \le \min \left\{\frac{1}{\sqrt{288 s_0^{5/2}\left(2 s_0^{3/2}+1\right)}},\frac{1}{\sqrt{\frac32 s_0^4 +6 s_0^{5/2}+2 s_0}}\right\}.
		\end{align*}
		Then,
		\begin{align}
		\lambda_{1} \left(X_{T_0 \cup T_1}^t X_{T_0 \cup T_1}  \right) & \le \tilde \lambda_1 + 3 s_0 \ \varepsilon_{max}
		\label{bornsup}
		\end{align}
		and 
		\begin{align}
		\lambda_{s_0+s_1} \left(X_{T_0 \cup T_1}^t X_{T_0 \cup T_1}  \right) & \geq \tilde \lambda_{s_0}-3s_0 \ \varepsilon_{min}.
		\label{borninf}
		\end{align}
		\end{coro}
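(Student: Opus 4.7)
The plan is to treat this Corollary as a concrete specialization of the preceding successive-perturbation Lemma with $\eta = \tfrac12$ and $s_1 = 3 s_0$. Substituting $\eta^2 = 1/4$, $(2-\eta)^2 = 9/4$, $(s_0+s_1)^3 = 64 s_0^3$, and $(s_0+s_1)^{3/2} = 8 s_0^{3/2}$ into the Lemma's formulas for $\varepsilon_{\min}$ and $\varepsilon_{\max}$ yields the explicit expressions stated in the Corollary; this step is purely algebraic and should recover $\varepsilon_{\min} = \tfrac14 (s_0^3 \mu^2/4 + s_0^{3/2}\mu)/(1/2 - s_0 \mu^2)$ and the displayed form of $\varepsilon_{\max}$.

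Next I would verify the Lemma's three hypotheses. Conditions (1) and (2) require $1/2 < \tilde\lambda_{s_0}$ and $1 < \tilde\lambda_1 < 3/2$; both are inherited from the regime $[\tilde\lambda_{s_0}, \tilde\lambda_1] \subset [3/4, 5/4]$ produced by Theorem \ref{CDspl} and stated in the main Theorem. The remaining part of (1), namely $1 - (s_0+s_1)\mu > \tilde\lambda_{s_0}$, reduces to $8 s_0 \mu < 1$ and is trivially implied by either coherence bound. For hypothesis (3), using $\tilde\lambda_{s_0} - 1/2 \geq 1/4$ and $3/2 - \tilde\lambda_1 \geq 1/4$, it suffices to establish $12 s_0 \varepsilon_{\min} \leq 1$ and $12 s_0 \varepsilon_{\max} \leq 1$. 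After clearing the denominators $1/2 - s_0 \mu^2$ and $\tilde\lambda_1 - 1$, these become two polynomial-in-$\mu$ inequalities, and the two coherence bounds $\mu \leq 1/\sqrt{288 s_0^{5/2}(2 s_0^{3/2}+1)}$ and $\mu \leq 1/\sqrt{\tfrac32 s_0^4 + 6 s_0^{5/2} + 2 s_0}$ are engineered precisely so that each absorbs both the $\mu^2$ and the linear-in-$\mu$ contributions from the corresponding numerator.

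With all three hypotheses confirmed, the bounds \eqref{bornsup} and \eqref{borninf} follow immediately by applying the conclusions of the preceding Lemma with $s_1 = 3 s_0$: namely $\lambda_1(X_{T_0 \cup T_1}^t X_{T_0 \cup T_1}) \leq \tilde\lambda_1 + 3 s_0 \varepsilon_{\max}$ and $\lambda_{s_0+s_1}(X_{T_0 \cup T_1}^t X_{T_0 \cup T_1}) \geq \tilde\lambda_{s_0} - 3 s_0 \varepsilon_{\min}$. The main obstacle, if any, is bookkeeping in the verification of hypothesis (3): one must pair each coherence bound with the correct $\varepsilon$-inequality and carefully track several competing terms, but no new ideas beyond the perturbation arguments already developed in Lemmas \ref{lemme:2}--\ref{lemme:3} are required.
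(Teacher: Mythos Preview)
Your approach is essentially the same as the paper's: specialize the preceding successive-perturbation Lemma by setting $\eta=\tfrac12$ and $s_1=3s_0$, then argue that the stated coherence bounds on $\mu$ force hypothesis~(3), after which the eigenvalue bounds \eqref{bornsup}--\eqref{borninf} follow directly from the Lemma. The paper's proof carries out exactly this verification of (3): it rewrites the two inequalities in (3), clears denominators, uses $\mu<1$ to homogenize the $\mu$ and $\mu^2$ terms, and then factors out $\mu^2$ to arrive at the two displayed coherence thresholds. Your outline does the same, with the minor addition that you also discuss how (1) and (2) arise from the $[\tfrac34,\tfrac54]$ spectral window supplied by Theorem~\ref{CDspl}; in the Corollary as stated these are simply hypotheses, so that part is context rather than proof, but it is not incorrect.
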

			
		\begin{proof}
			Set $\eta=\frac12 $, assumption \eqref{lemme4:h3} writes 
			\begin{align*}
			s_1 < \frac{4(\tilde \lambda_{s_0}-\frac12)(\frac12-s_0 \mu^2)}{s_0^3 \mu^2 \frac14 +s_0^{3/2}\mu}
			\end{align*}
			and 
			\begin{align*}
			s_1 <  \frac{4(\frac32-\tilde  \lambda_1)(\tilde  \lambda_1-1)}{(s_0+s_1)^3 \mu^2 \frac94 + 9(s_0+s_1)^{3/2}\mu}
			\end{align*}
			which leads to 
			\begin{align*}
			s_1\left(s_0^3 \mu^2 \frac14 +s_0^{3/2}\mu\right)+4\left(\tilde \lambda_{s_0}-\frac12\right)s_0\mu^2<2\left(\tilde \lambda_{s_0}-\frac12\right)
			\end{align*}
			and
			\begin{align*}
			s_1\left((s_0+s_1)^3 \mu^2 \frac94 + 9(s_0+s_1)^{3/2}\mu\right)<4\left(\frac32-\tilde  \lambda_1\right)(\tilde \lambda_1-1).
			\end{align*}
			Since $\mu<1$
			\begin{align*}
			s_1\left(s_0^3 \mu^2 \frac14 +s_0^{3/2}\mu^2\right)+4\left(\tilde  \lambda_{s_0}-\frac12\right)s_0\mu^2<2\left(\tilde  \lambda_{s_0}-\frac12\right)
			\end{align*}
			and
			\begin{align*}
			s_1\left((s_0+s_1)^3 \mu^2 \frac94 + 9(s_0+s_1)^{3/2}\mu^2\right)<4\left(\frac32-\tilde \lambda_1\right)(\tilde \lambda_1-1)
			\end{align*}
			The result follows by factoring out $\mu^2$ and setting $s_1=3s_0$.
		\end{proof}

	\subsection{Bounding scalar products}
	\begin{lemm} 
		Let $|T_0|=s_0$ and $|T_1|=s_0$, $T_0$, $T_1$ disjoint. Let $T=T_0 \cup T_1$ and $T'$ be two disjoint subsets of $\{1,\ldots,p\}$ with  $|T'|=2s_0$. Let $g$ and $h$ be vectors in $\mathbb R^p$. Assume that 
	    \begin{align*}
    		\mu \le \min \left\{\frac{1}{\sqrt{288 s_0^{5/2}\left(2 s_0^{3/2}+1\right)}},\frac{1}{\sqrt{\frac32 s_0^4 +6 s_0^{5/2}+2 s_0}}\right\}.
		\end{align*}
		Then, 
		\begin{align}
		\left|\langle X_T g_T,X_{T'}h_{T'} \rangle \right| & \le \left( \lambda_1 + 3\ s_0 \ \varepsilon_{max} \right) \ \Vert g_T\Vert_2 \ \Vert h_{T'} \Vert_2.
		\end{align} 
		\label{scal}
	\end{lemm}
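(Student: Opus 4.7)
The strategy is to package $\langle X_T g_T, X_{T'} h_{T'}\rangle$ as a single bilinear form in the Gram matrix of the columns indexed by $S := T \cup T'$, and then to bound its operator norm via Corollary \ref{cor}. Numerologically this is exactly calibrated: $|S|=2s_0+2s_0=4s_0=s_0+3s_0$, matching the Corollary with $s_1=3s_0$.

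Concretely, I would embed $g_T$ and $h_{T'}$ into $\mathbb R^{|S|}$ as follows. Let $u$ carry $g_T$ on the $T$-coordinates of $S$ and zeros on the $T'$-coordinates, and let $w$ carry $h_{T'}$ on the $T'$-coordinates and zeros on the $T$-coordinates. Then $X_S u = X_T g_T$ and $X_S w = X_{T'} h_{T'}$, so
\begin{align*}
\langle X_T g_T, X_{T'} h_{T'}\rangle = u^t (X_S^t X_S)\, w.
\end{align*}
Because $u$ and $w$ have disjoint supports we have $\|u\|_2=\|g_T\|_2$ and $\|w\|_2=\|h_{T'}\|_2$, and Cauchy--Schwarz combined with the definition of the operator norm gives
\begin{align*}
\bigl|u^t (X_S^t X_S)\, w\bigr| \;\le\; \lambda_1(X_S^t X_S)\,\|g_T\|_2\,\|h_{T'}\|_2.
\end{align*}

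To conclude, I would apply Corollary \ref{cor} to the index set $S$, with base block $\widetilde T_0$ taken to be the $s_0$-subset $T_0$ already present in the outer proof, and $\widetilde T_1 := S\setminus T_0 = T_1\cup T'$ of cardinality $s_0+2s_0=3s_0$. The coherence hypothesis in Lemma \ref{scal} is identical to the one in Corollary \ref{cor}, so \eqref{bornsup} yields $\lambda_1(X_S^t X_S)\le \lambda_1 + 3s_0\,\varepsilon_{max}$, which chained with the Cauchy--Schwarz step gives the claim.

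The main obstacle is verifying hypotheses (1)--(2) of Corollary \ref{cor} for this choice of base block, namely that $\tilde\lambda_1\in(1,3/2)$ and $\tilde\lambda_{s_0}\in(1/2,\,1-(s_0+s_1)\mu^2)$. These are not reproved inside the lemma: they are inherited from the high-probability weak-RIP bound \eqref{un} coming from Theorem \ref{CDspl}, which already places $\lambda_{s_0}(X_{T_0}X_{T_0}^t),\lambda_1(X_{T_0}X_{T_0}^t)\in[3/4,5/4]$, comfortably inside the required window with $\eta=1/2$. Once these are secured, the remainder is a routine reduction of a bilinear estimate to a spectral bound.
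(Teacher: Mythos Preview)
Your argument is correct and proves the lemma exactly as stated, but it takes a different route from the paper. The paper normalizes to $\|g_T\|_2=\|h_{T'}\|_2=1$ and uses the polarization (parallelogram) identity
\[
\langle X_T g_T, X_{T'} h_{T'}\rangle=\tfrac14\Big(\|X_{T\cup T'}(g_T+h_{T'})\|_2^2-\|X_{T\cup T'}(g_T-h_{T'})\|_2^2\Big),
\]
and then applies both \eqref{bornsup} and \eqref{borninf} from Corollary~\ref{cor} to the two terms. Because $\|g_T\pm h_{T'}\|_2^2=2$, this yields the sharper bound
\[
\bigl|\langle X_T g_T, X_{T'} h_{T'}\rangle\bigr|\le \lambda_1-\lambda_{s_0}+3s_0(\varepsilon_{max}+\varepsilon_{min}),
\]
which is the form actually invoked in the proof of Theorem~\ref{main}. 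Your direct Cauchy--Schwarz/operator-norm reduction $|u^t(X_S^tX_S)w|\le \lambda_1(X_S^tX_S)\|u\|_2\|w\|_2$ is cleaner and uses only \eqref{bornsup}, giving precisely the constant $\lambda_1+3s_0\,\varepsilon_{max}$ announced in the lemma; however, it cannot see the spectral gap $\lambda_1-\lambda_{s_0}$, so if you also need the form used downstream in the main theorem you would have to revert to polarization. Both approaches rely on Corollary~\ref{cor} in the same way, with the same $4s_0=s_0+3s_0$ numerology and the same inherited hypotheses on $\tilde\lambda_1,\tilde\lambda_{s_0}$ from \eqref{un}.
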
 
	\begin{proof}
		Assume first that $\Vert g_{T} \Vert_2=\Vert h_{T'} \Vert_2=1$. 
		The parallelogram law now gives 
		\begin{align*}
		\left\vert \langle X_T g_T, X_{T'} h_{T'} \rangle \right\vert & \le
		\frac14 \left\vert \Vert X_T g_T + X_{T'} h_{T'} \Vert_2^2
		-\Vert X_T g_T - X_{T'} h_{T'} \Vert_2^2 \right\vert \\
		& \le  \frac14 \left\vert \Vert X_T g_T + X_{T'} h_{T'} \Vert_2^2
		-\Vert X_T g_T - X_{T'} h_{T'} \Vert_2^2 \right\vert \\
		\end{align*}
		Notice that 
		\begin{align*} 
		\Vert X_T g_T\pm X_{T'} h_{T'} \Vert_2^2 & = \Vert X_{T \cup T'} (g_{T} \pm h_{T'}) \Vert_2^2.
		\end{align*} 
		By Corollary \ref{cor}, we have 
		\begin{align*} 
		\left( \lambda_{s_0} - 3\ s_0 \ \varepsilon_{min} \right) \Vert g_T+h_{T'} \Vert_2^2 \ \le \Vert g_T+h_{T'} \Vert_2^2 
	    & \Vert X_T g_T\pm X_{T'} h_{T'} \Vert_2^2  \le \left( \lambda_1 + 3\ s_0 \ \varepsilon_{max} \right) \ \Vert g_T+h_{T'} \Vert_2^2. 
		\end{align*} 
        From this, and the fact that $g_T$ and $h_T$ are unit norm, we deduce that 
		\begin{align*}
		\left\vert \langle X_T g_T, X_{T'} h_{T'} \rangle \right\vert & \le
		\lambda_1-\lambda_{s_0}  + 3\ s_0 \ \left(\varepsilon_{max} +\varepsilon_{min}\right). 
		\end{align*}
		The proof is completed using homogeneity. 
	\end{proof}
	
	% biography section
	% 
	% If you have an EPS/PDF photo (graphicx package needed) extra braces are
	% needed around the contents of the optional argument to biography to prevent
	% the LaTeX parser from getting confused when it sees the complicated
	% \includegraphics command within an optional argument. (You could create
	% your own custom macro containing the \includegraphics command to make things
	% simpler here.)
	%\begin{biography}[{\includegraphics[width=1in,height=1.25in,clip,keepaspectratio]{mshell}}]{Michael Shell}
	% or if you just want to reserve a space for a photo:
	% 
	% \begin{IEEEbiography}{St\'ephane Chr\'etien}
	% Biography text here.
	% \end{IEEEbiography}

	% insert where needed to balance the two columns on the last page with
	% biographies
	%\newpage

	% You can push biographies down or up by placing
	% a \vfill before or after them. The appropriate
	% use of \vfill depends on what kind of text is
	% on the last page and whether or not the columns
	% are being equalized.
	
	%\vfill
	
	% Can be used to pull up biographies so that the bottom of the last one
	% is flush with the other column.
	%\enlargethispage{-5in}
	
	\bibliographystyle{amsplain}
	\bibliography{CohWeakNSP}
	
	% that's all folks
\end{document}